\def\sq{\square}
\def\rr{\mathbb R}
\def\si{\sigma}
\def\<{\langle}
\def\>{\rangle}
\def\Bax{\mathcal{B}}
\def\.{\hskip.04cm}
\def\ts{\hskip.02cm}
\title[Shape of Doubly Alternating Baxter Permutations]
{The Expected Shape of Random Doubly Alternating Baxter Permutations}
\author[Theodore Dokos and Igor Pak]{Theodore Dokos$^\star$ and Igor Pak$^\star$}
\thanks{\thinspace ${\hspace{-.45ex}}^\star$
Department of Mathematics,
UCLA, Los Angeles, CA, 90095.
\hskip.06cm
Email:
\hskip.02cm
\texttt{\{tdokos,pak\}@math.ucla.edu}}
\theoremstyle{definition}
\newtheorem{remark}[theorem]{Remark}
\newcommand{\Psum}[1]{P_{\textsc{\tiny #1}}}
\begin{document}
\maketitle{}

%\vskip-.4cm

\begin{abstract}
Guibert and Linusson introduced in~\cite{guibert-linusson} the family of
\emph{doubly alternating Baxter permutations}, i.e.~Baxter permutations
$\si \in S_n$, such that $\si$ and $\si^{-1}$ are alternating.  They proved
that the number of such permutations in~$S_{2n}$ and~$S_{2n+1}$
is the Catalan number~$C_n$.  In this paper we explore the expected limit shape
of such permutations, following the approach by Miner and Pak~\cite{miner-pak}.
\end{abstract}

\vskip1.4cm

\section{Introduction}

\noindent
A \emph{Catalan structure} is a family of combinatorial objects whose number is the Catalan number
$$
C_n \, = \, \frac{1}{n+1}\ts \binom{2n}{n}\..
$$
There is a staggering amount of literature on various Catalan structures,
the list of which is ever growing (see~\cite{Gould,Pak,Slo,S1,S2}).
This multitude and diversity of Catalan structures %replaced catalan numbers with catalan structures
is in part a
consequence of their different nature, and in part a misperception,
as many such structures are essentially equivalent, via a ``nice bijection''.
In the latter case, this can happen despite the apparently different geometric
representations of the Catalan objects (think polygon triangulations
vs.~binary trees). Discerning different combinatorial structures
can be difficult and even harder to formalize (as in, how do you prove
non-existence of a ``nice bijection''?).

\smallskip

In this paper we study the asymptotic behavior of doubly alternating
Baxter permutations $\si\in S_n$.  Following~\cite{miner-pak},
we compute the \emph{expected limit shape} of $\Bax_n$ by viewing permutations
as 0-1 matrices, and averaging them.  When scaled appropriately
the resulting distribution on $[0,1]^2$ converges to a \emph{limit
surface} $\Phi(x,y)$ with an explicit formula that seems new in
the literature, thus differentiating this Catalan structure
from others (see Theorem~\ref{thm:asymptotic} below).

\smallskip

\emph{Baxter permutations} \ts are defined to be $\si\in S_n$,
such that there are no indices \ts $1\le i < j < k\le n$, which
satisfy \ts $\si(j + 1) < \si(i) < \si(k) < \si(j)$ \ts or \ts
$\si(j) < \si(k) < \si(i) < \si(j + 1)$.  They were introduced
and enumerated by Baxter in 1964~\cite{baxter} (see also~\cite{chung-et,Vie}),
and recently became a popular subject (see~$\S\ref{s:fin-bax}$).

The \emph{alternating permutations} are defined to be $\si\in S_n$,
such that $\si(1)<\si(2)>\si(3)<\si(4)>\ldots$.  They were defined
by Andr\'{e} in~1879, and extensively studied over the years
(see~\cite{Sta,S1}).  Their number is known as the
\emph{Euler--Bernoulli number} with the exponential
generating function $\tan(x) + \sec(x)$, and they have
numerous connections to other fields.

A Baxter permutation $\si \in S_n$ is called
\emph{doubly alternating} if both~$\si$ and $\si^{-1}$ are alter-nating.
Guibert and Linusson showed in~\cite{guibert-linusson} that
the set $\Bax_n$ of such permutations is a Catalan structure:
$$\bigl|\Bax_{2n}\bigr| \. = \. \bigl|\Bax_{2n+1}\bigr| \. = \. C_n\,.$$
Let $P(m,i,j)$ denote the probability that a random $\si\in \Bax_{2m}$ has
$\si(i)=j$.  Here is our main result.

\medskip

\begin{theorem}\label{thm:asymptotic}
  Let $0 < \alpha < \beta < 1 - \alpha$.
  Then, as $m$ goes to infinity, we have:
  \begin{equation*}
    P(m,\lfloor 2\alpha m \rfloor , \lfloor 2\beta m \rfloor) \, \sim \, \frac{\varphi(\alpha,\beta)}{m}\.,
  \end{equation*}
  where
  \begin{equation*}
    \varphi(\alpha,\beta) \, = \.\frac{1}{8\pi}\. \int_0^\alpha \int_0^{\alpha-y} \.\frac{dx\ dy}{[(x+y)(\beta-x)(1-\beta-y)]^{3/2}}\..
  \end{equation*}
\end{theorem}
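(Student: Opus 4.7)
The plan is to follow the framework of Miner--Pak, adapted to the doubly alternating Baxter setting. First I would use an explicit form of the Guibert--Linusson bijection between $\Bax_{2m}$ and a Catalan structure of semilength $m$ (e.g.\ Dyck paths, or pairs of non-crossing lattice paths) that keeps track of where individual entries of $\si$ land. This allows the condition $\si(i)=j$ to be translated into a condition on the Catalan object, typically a requirement that certain specified ``corners'' or ``crossings'' occur at prescribed positions. The alternating conditions on $\si$ and $\si^{-1}$ should manifest as local conditions on consecutive steps of the path, which is what enables a Catalan-style enumeration in the first place.

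The main enumerative step is to show that the set $\{\si\in \Bax_{2m}:\si(i)=j\}$ can be counted by a double sum over two integer parameters $a,b$ (which will rescale to the continuous variables $x,y$), in which the summand is a product of three Catalan-type counts of sizes $k_1,k_2,k_3$ with $k_1+k_2+k_3=m$. The three matrix-quadrants determined by $(i,j)$, together with the linking forced by the alternating condition at positions $i-1,i,i+1$, provide a natural source for such a 3-fold decomposition; the parameter $a$ records how many matrix entries fall in one specified quadrant, and $b$ records a second degree of freedom coming from the structural invariants of the Guibert--Linusson bijection. The three $3/2$-powers in the integrand of $\vp(\al,\be)$ are exactly what one expects from three Catalan factors, since $C_n\sim 4^n/(\sqrt{\pi}\. n^{3/2})$, and the linear identity $(x+y)+(\be-x)+(1-\be-y)=1$ ensures that the exponential factors $4^{k_i}$ recombine to $4^m$, cancelling the leading part of $|\Bax_{2m}|=C_m$.

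Once the exact count is established, the asymptotic analysis is routine Stirling. Each Catalan factor is replaced by its leading-order estimate, and the normalized count becomes a Riemann sum over $(a,b)$. The area element $\Delta a\.\Delta b/m^2$ combined with the surviving $m^{3/2}$ from the ratio of the $(k_i/m)^{-3/2}$ factors with $m^{-3/2}$ produces the claimed $1/m$ prefactor together with the integrand $[(x+y)(\be-x)(1-\be-y)]^{-3/2}$. The numerical constant $1/(8\pi)$ is accounted for by $(\sqrt{\pi})^{-3}\cdot\sqrt{\pi}$ from the Catalan asymptotics together with the factor of $2$ in the scalings $i=\lfloor 2\al m\rfloor$ and $j=\lfloor 2\be m\rfloor$. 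The integration region $\{0\le y\le\al,\,0\le x\le\al-y\}$ encodes the combinatorially admissible range for $(a,b)$, namely that the quadrant populations sum correctly to the total size.

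The main obstacle will be the enumerative decomposition. Baxter pattern-avoidance is a global constraint on triples of well-separated positions, so it is not a priori obvious that splitting the permutation matrix according to $(i,j)$ produces three independent sub-problems. One expects that under Guibert--Linusson the global Baxter condition is largely already enforced by the Catalan structure of the image, so that after the bijection the decomposition becomes transparent; making this explicit enough to obtain a clean product formula is the crux of the argument. Secondary care is required near the boundary of the integration region, where the integrand has integrable singularities ($x+y\to 0$, $x\to\be$, $y\to 1-\be$); one must ensure that the error terms in Stirling's expansion remain uniformly small in a neighbourhood of these singularities, so that the Riemann sum genuinely converges to the stated integral.
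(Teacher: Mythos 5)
Your high-level plan (exact enumeration of $\{\sigma\in\Bax_{2m}:\sigma(i)=j\}$, then Catalan asymptotics and a Riemann sum) matches the paper's strategy, but the crucial enumerative step is left as a hope, and the form you anticipate for it is wrong in a way that changes the answer. The paper's Main Lemma, proved by induction on $m$ via the recurrence obtained by conditioning on the position $\sigma(2k)=2m$ of the maximum (using the Guibert--Linusson decomposition $\sigma=2341[1,\tau^c,1,\omega]$) together with the diagonal and antidiagonal symmetries of $B(m,i,j)$, gives for even $i=2\alpha m$, $j=2\beta m$ the sum $\sum_{r,s}p_{r,s}\,C_{\beta m-r}\,C_{(1-\beta)m-s}$ over the triangle $r+s\le\alpha m$, where $p_{r,s}$ is a \emph{partial} Catalan convolution, not a Catalan number, and the three indices sum to $m-1$, not $m$. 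Your proposed summand $C_{k_1}C_{k_2}C_{k_3}$ with $k_1+k_2+k_3=m$ would produce the constant $1/\pi$, not $1/(8\pi)$: the $8$ arises as $2\cdot 4$, the $1/4$ coming from the index deficit ($4^{m-1}/4^m$) and the $1/2$ from the nontrivial bulk estimate $p_{r,s}\sim\tfrac12 C_{r+s-1}$, which holds only when both $s$ and $r$ are at least a power $(r+s)^\rho$ of the total. Your accounting of the constant via ``$(\sqrt{\pi})^{-3}\sqrt{\pi}$ plus the factor of $2$ in the scalings'' does not yield an $8$ and is not where it comes from; the $2$ in $i=\lfloor 2\alpha m\rfloor$, $j=\lfloor 2\beta m\rfloor$ is already absorbed into the halved indices $a=\lceil i/2\rceil$, $b=\lfloor j/2\rfloor$ and contributes no constant.

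Your treatment of the limit passage is also too optimistic. Under the hypothesis $\alpha<\beta<1-\alpha$, the factors $\beta-x$ and $1-\beta-y$ stay bounded away from $0$ on the region $\{x,y\ge 0,\ x+y\le\alpha\}$, so the singularities you worry about at $x\to\beta$, $y\to 1-\beta$ are not in play; the only singularity is at the origin. The real work, which the paper does by partitioning the summation triangle into four regions, is (i) showing that the terms with $r+s$ small --- where individual summands are \emph{largest}, of order $m^{-3/2}$ --- contribute $o(1/m)$ in total, and (ii) showing that the strips near the legs of the triangle, where the approximation $p_{r,s}\approx\tfrac12 C_{r+s-1}$ fails, also contribute $o(1/m)$. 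Point (ii) cannot even be formulated in your setup, since the partial-convolution factor is absent from your anticipated formula. Without an actual proof of the counting formula and of the bulk estimate for $p_{r,s}$, the proposal does not establish the theorem.
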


\begin{figure}[h]
  \centering
  \includegraphics[width=8cm]{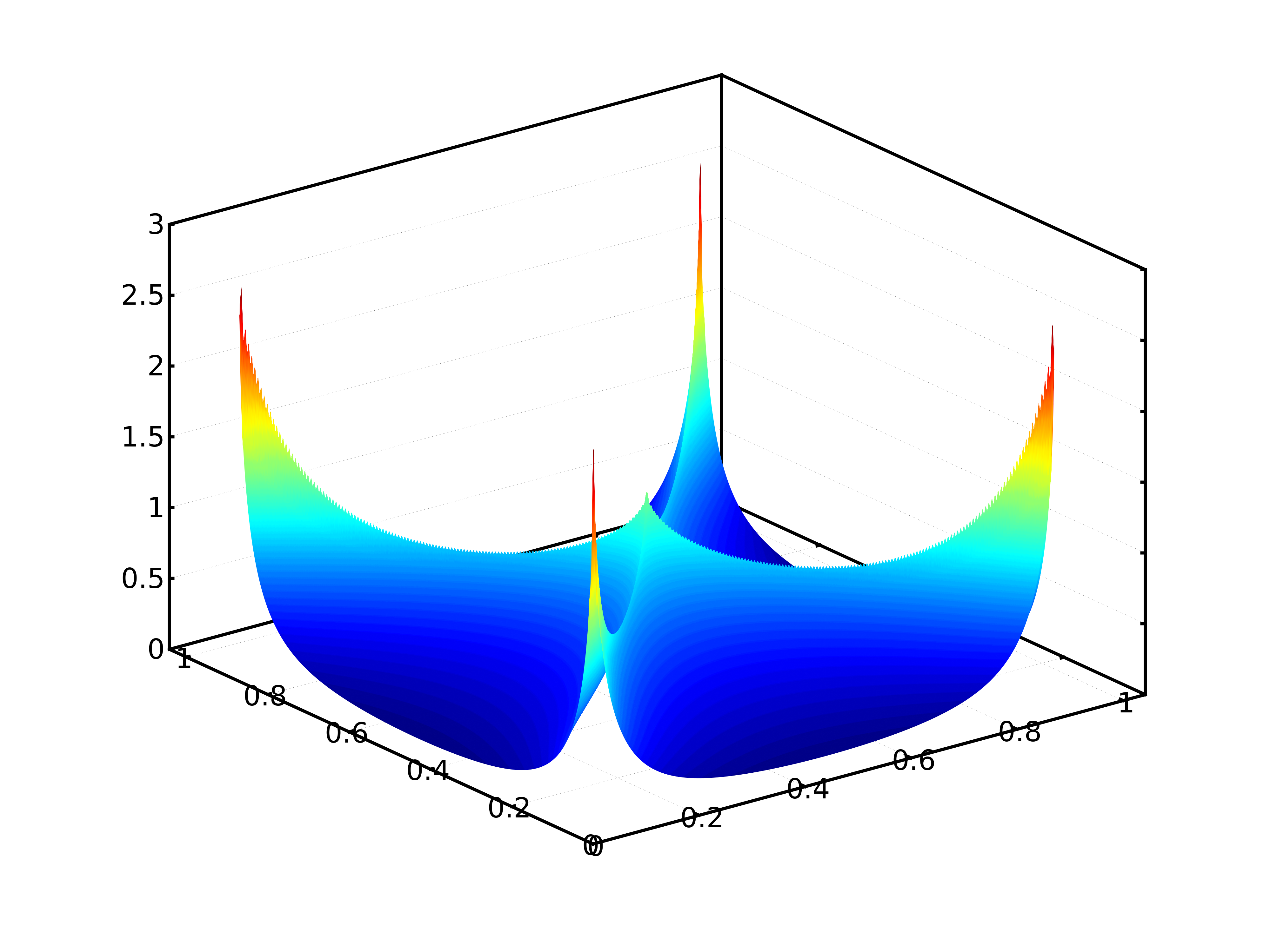}
  \caption{The graph of the limit surface~$\Phi(x,y)$.}
  \label{fig:limit-surface}
\end{figure}

In other words, when the distribution $P(m,i,j)$ is scaled appropriately,
it converges to the limit surface $\Phi(x,y)$ as shown in Figure~\ref{fig:limit-surface}.
Here the surface $\Phi: (0,1)^2\to \rr$ is obtained from
$\varphi: \{0<x<y<1, \. x+y < 1\}\to \rr$ by reflection across both diagonals.

There are a few things to note.  First, the surface $\Phi(x,y)$ has the symmetry
group of a square. Curiously, most of symmetries do not appear in the actual
numbers~$B(m,i,j)$.
We discuss this in further detail in Section \ref{sec:final-remarks}.
%despite visual similarities,
%surface $\Phi(x,y)$ is only symmetric across diagonals,
%but does not have a rotational symmetry.
Second, most of the surface is ``under water'', i.e. we have $\Phi(x,y)<1$
everywhere except near the corners and at the center spike $\Phi(\frac12,\frac12)=\frac32$.
The reason is simple: the peaks in the corners actually diverge
(see Theorem~\ref{t:spikes}).\footnote{Our graph in Figure~\ref{fig:limit-surface}
has been truncated.}   Finally, the limit shape for $\Bax_n$ is fundamentally different
from those in~\cite{miner-pak}, where the limit surfaces are degenerate.
We give more details on these in Section~\ref{sec:final-remarks}.

\smallskip

The rest of the paper is structured as follows. In the next section
we introduce the basic notations.  In Section~\ref{sec:DAB-results},
we present the main lemma (Lemma~\ref{mainlemma:main-theorem-enumerative}),
giving an explicit triple summation formula for~$P(m,i,j)$.
Section~\ref{sec:proof-mainlemma} contains the proof of the main lemma.
We then use the main lemma to prove Theorem~\ref{thm:asymptotic} in
Section~\ref{sec:asymptotics}.  We conclude with final remarks and
open problems in Section~\ref{sec:final-remarks}.

\bigskip

\section{Basic definitions and notation}

\noindent
For a permutation $\sigma \in S_n$, its \emph{complement} $\sigma^c$
is defined pointwise by $\sigma^c (i) = n + 1 - \sigma (i)$.
Given $\sigma = a_1 \cdots a_n \in S_n$ and permutations
$\tau_1, \dots ,\tau_n$, we define the \emph{inflation}
$\sigma[\tau_1, \dots ,\tau_n]$ to be the permutation obtained
by replacing $a_i$ with a copy of $\tau_i$, shifted to be in
the same relative position as $a_i$.

We say that $a_n \sim b_n$, or $a_n$ is asymptotically equivalent
to~$b_n$ if  $a_n/b_n \rightarrow 1$ as $n\rightarrow \infty$.
Recall Stirling's formula,
\begin{equation*}
  n! \. \sim \. \sqrt{2\ts \pi\ts n} \left( \frac{n}{e} \right)^n\..
\end{equation*}
It gives the following asymptotic formula for the Catalan numbers:
\begin{equation}\label{eq:catalan-asymptotic}
  C_n \. \sim \. \frac{1}{\sqrt{\pi} \ts n^{3/2}} \,\ts 4^n\,.
\end{equation}
In Section~\ref{sec:asymptotics}, we will make heavy use of this approximation.

\bigskip

\section{Main Lemma}
\label{sec:DAB-results}

\noindent
As in the introduction, let $\mathcal{B}_n$ denote the set of doubly
alternating Baxter permutations of length~$n$.  Recall that $|\mathcal{B}_{2m}|$
is equal to the $m$-th Catalan number $C_m$.  It was proved
in~\cite{guibert-linusson} that for $\sigma \in \mathcal{B}_n$ with $n$~odd,
we have $\sigma = 12[1,\tau^c]$, for some $\tau \in \mathcal{B}_{n-1}$.
Therefore, the limit shape of $\mathcal{B}_{2m+1}$ is that of~$\mathcal{B}_{2m}$.
From this point on, we restrict our attention to sets~$\mathcal{B}_{2m}$.

It was further proved~\cite{guibert-linusson}, that elements of~$\mathcal{B}_{2m}$
can be described by a standard recursive Catalan structure.
Specifically, if $\sigma(1) = 2k+1$ (and $\sigma(1)$ must always be odd),
then we have $\sigma = 2341[1,\tau^c,1,\omega]$ with $\omega \in \mathcal{B}_{2k}$
and $\tau \in \mathcal{B}_{2(m-k-1)}$.
Figure~\ref{fig:dab-recursive-structure} illustrates this decomposition.
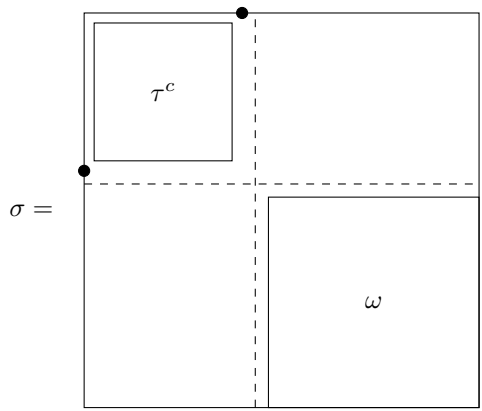
\begin{figure}[h]\centering
  \begin{tikzpicture}
    [scale=0.35,
    every node/.style={align=center}]

    \draw (0,7.5) node {$\sigma = $};

    \begin{scope}[xshift = 2cm]
      \draw (0,0) rectangle (15,15);

      \filldraw (0,9) circle (6pt);
      \filldraw (6,15) circle (6pt);
      \draw (3,12) node {$\tau^c$};
      % \draw[dotted] (0,9) rectangle (6,15);
      \draw (0.38,9.38) rectangle (5.62,14.62);

      \draw[dashed] (0,8.5) -- (15,8.5);
      \draw[dashed] (6.5,0) -- (6.5,15);

      \draw (7,0) rectangle (15,8);
      \draw (11,4) node {$\omega$};
    \end{scope}
  \end{tikzpicture}
  \caption{A recursive description for $\sigma \in \mathcal{B}_{2m}$.}
  \label{fig:dab-recursive-structure}
\end{figure}

For $r,s \geq 1$, let $p_{r,s}$ be the number of Dyck paths of
length~$2(r+s-1)$, which first revisit the horizontal axis by
step~$2s$ at the latest.  Then $p_{r,s}$ is given by a partial
Catalan convolution:
\begin{equation*}
  p_{r,s} \, = \, \sum_{k=1}^{s} \. C_{r+s-(k+1)} \cdot C_{k-1}
  \, = \, C_{r+s-2}C_0 \. + \. \ldots \. + \. C_{r-1}C_{s-1}\..
\end{equation*}
Let $B(m,i,j)$ be the number of permutations $\sigma \in \mathcal{B}_{2m}$
such that $\sigma(i) = j$.
% In the notation of the introduction, we have:
% \begin{equation*}
%   P(m,i,j) \. = \. \frac{B(m,i,j)}{C_m}\,.
% \end{equation*}
%
We use the recursive structure above to obtain the following summation
formula for~$B(m,i,j)$, in terms of the Catalan numbers.

\smallskip

\begin{mainlemma}\label{mainlemma:main-theorem-enumerative}
  Let $B(m,i,j)$ and $p_{r,s}$ be defined as above, and let
  $a = \left\lceil i/2 \right\rceil$, $b = \left\lfloor j/2 \right\rfloor$.
  Suppose further that $i \leq j \leq 2m-i$.  Then we have:
  \begin{equation}
    B(m,i,j) = \left\{
      \begin{array}{rll}
        C_b C_{m-b-1} \!\!\!\! & + \displaystyle\sum_{r=1}^{a - 1} \displaystyle\sum_{s=1}^{a - r} p_{r,s} \cdot C_{b-r+1} \cdot C_{m-b-s-1} & \text{for $j$ odd}\.,\\
        [2em]
        & \phantom{+} \displaystyle\sum_{r=1}^{a - 1} \displaystyle\sum_{s=1}^{a - r} p_{r,s} \cdot C_{b-r} \cdot C_{m-b-s} & \text{for $j$ even}\..
      \end{array}\right. \label{eq:main-lemma-equation}\tag{$\ast$}
  \end{equation}
\end{mainlemma}

\smallskip

We prove the main lemma by induction in the next section, and use it in
Section~\ref{sec:asymptotics} to prove Theorem~\ref{thm:asymptotic}.

\bigskip

\section{Proof of Main Lemma}
\label{sec:proof-mainlemma}

\subsection{The symmetries}
We observe the following easy properties of~$B(m,i,j)$.
\begin{proposition}
  We have
  \begin{enumerate}
  \item[$(i)$]
    $B(m,i,j) = B(m,j,i) = B(m,2m+1-j,2m+1-i)$\ts.

  \item[$(ii)$]
    For~$i=1$, we have
    \begin{equation*}
      B(m,1,j) = \left\{
        \begin{array}{ll}
          0 & \text{if $j$ is even}\.,\\
          C_{b} \cdot C_{m- b - 1} & \text{if $j = 2b+1$ is odd}\..
        \end{array}
      \right.
    \end{equation*}

  \item[$(iii)$]
    Let $i>1$ and $j < 2m$.
    Then
    % \begin{align*}
    %   B(m,i,j) =& \sum_{1\leq 2k < i} B(m-k,i-2k,j) \cdot C_{k-1}\nonumber\\
    %   &+ \sum_{i < 2k \leq 2m} B(k - 1,i-1,2m-j) \cdot C_{m-k}.
    % \end{align*}
    \begin{equation*}\label{eq:basic-recurrence-for-dabs}
      \begin{aligned}
        B(m,i,j)\, = \, & \sum_{k=1}^{\lceil i/2 \rceil - 1} C_{k-1} \cdot B(m-k,i-2k,j) \,
        + \ts \sum_{k=m - \lfloor j/2 \rfloor + 1}^{m} C_{m-k} \cdot B(k-1,i-1,2m-j)\ts.
      \end{aligned}
  \end{equation*}

  \end{enumerate}
\end{proposition}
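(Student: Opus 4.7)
The plan is to derive all three parts from the recursive decomposition $\sigma = 2341[1,\tau^c,1,\omega]$ of $\mathcal{B}_{2m}$ and the standard symmetries of doubly alternating Baxter permutations.

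For part $(i)$, the identity $B(m,i,j) = B(m,j,i)$ is immediate from the involution $\sigma \mapsto \sigma^{-1}$: the Baxter property is closed under inverse, and the condition ``$\sigma$ and $\sigma^{-1}$ both alternating'' is manifestly self-inverse, so the map sends $\{\sigma : \sigma(i)=j\}$ bijectively onto $\{\sigma:\sigma(j)=i\}$. For the second identity, I would compose inverse with the reverse-complement $\sigma^{rc}(i) := 2m+1-\sigma(2m+1-i)$. Reverse-complement preserves the Baxter property, and because $n=2m$ is even, it also preserves the start-low alternating pattern (the sequence of ``$<$'' and ``$>$'' is reversed, but for even $n$ this returns to the same pattern). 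Hence $\sigma \mapsto (\sigma^{rc})^{-1}$ is a bijection on $\mathcal{B}_{2m}$, and a direct calculation shows that $\sigma(i)=j$ is sent to $\tau(2m+1-j)=2m+1-i$.

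For part $(ii)$, as recalled from~\cite{guibert-linusson}, the value $\sigma(1)$ is always odd, giving $B(m,1,j)=0$ when $j$ is even. When $j = 2b+1$, the recursive formula $\sigma = 2341[1,\tau^c,1,\omega]$ with $\sigma(1)=2k+1$ forces $k = b$, and then the pair $(\tau,\omega)\in \mathcal{B}_{2(m-b-1)} \times \mathcal{B}_{2b}$ can be chosen freely, giving a count of $C_{m-b-1} \cdot C_b$.

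For part $(iii)$, the plan is to stratify $\mathcal{B}_{2m}$ by the value $\sigma(1) = 2k+1$ and then by which of the four blocks of $2341[1,\tau^c,1,\omega]$ contains position~$i$. Unwinding the inflation: the $\omega$ block occupies positions $2m-2k+1,\ldots,2m$ with values $1,\ldots,2k$; the $\tau^c$ block occupies positions $2,\ldots,2m-2k-1$ with values $2k+2,\ldots,2m-1$; and the two singletons give $\sigma(1)=2k+1$ and $\sigma(2m-2k)=2m$. Under the hypotheses $i>1$ and $j<2m$, both singleton cases contribute zero, so only two cases remain. In the $\omega$ case, $\sigma(i)=j$ translates to $\omega(i-2m+2k)=j$, contributing $C_{m-k-1}\cdot B(k,\, i-2m+2k,\, j)$; reindexing by $k' = m-k$ turns this into the first sum $\sum_{k'=1}^{\lceil i/2\rceil - 1} C_{k'-1}\cdot B(m-k',\, i-2k',\, j)$. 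In the $\tau^c$ case, the complement on $\tau^c$ turns $\sigma(i)=j$ into $\tau(i-1) = 2m-j$, contributing $C_k \cdot B(m-k-1,\, i-1,\, 2m-j)$; reindexing by $k' = m-k$ yields the second sum.

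The main obstacle will be matching the summation bounds precisely, in particular the lower bound $m-\lfloor j/2\rfloor + 1$ on the second sum. I expect this bound to come not from the positional constraint ``position $i$ lies in the $\tau^c$ block,'' but from the value constraint $2m-j\leq 2(m-k-1)$, which is exactly what is needed for $B(m-k-1,i-1,2m-j)$ to be nonzero; conversely, the upper bound $\lceil i/2\rceil-1$ on the first sum should come from the positional requirement that $\omega$ reach as far left as position~$i$. I will have to verify carefully that, under the standing hypothesis $i \leq j \leq 2m-i$, these bounds either coincide with the positional ones or else the extra terms vanish automatically because $B$ is zero at the degenerate arguments.
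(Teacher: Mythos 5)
Your proposal is correct and takes essentially the same route as the paper: part $(i)$ via the diagonal and antidiagonal reflections (your $\sigma\mapsto\sigma^{-1}$ and $\sigma\mapsto(\sigma^{rc})^{-1}$), part $(ii)$ directly from the Guibert--Linusson decomposition, and part $(iii)$ by conditioning on that decomposition, where your stratification by $\sigma(1)=2k+1$ is exactly the paper's conditioning on the position of the maximum ($\sigma(2k)=2m$), reindexed by $k\mapsto m-k$. The check you defer is precisely the point the paper also flags: the second sum's lower bound is the value constraint $2m-j\le 2(k-1)$, and any terms between it and the positional cutoff vanish automatically since $B(k-1,i-1,2m-j)=0$ once $i-1>2(k-1)$.
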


\begin{proof}
    The equalities in~$(i)$ correspond to reflection over the main diagonal and main antidiagonal, respectively.
    These reflections preserve both the Baxter and doubly alternating properties.

    The equality in~$(ii)$ follows immediately from the recursive structure of $\mathcal{B}_{2m}$ given in Figure~\ref{fig:dab-recursive-structure}.
    As a special case, note that $B(m,1,1) = C_{m-1}$\ts.

    The recurrence relation in~$(iii)$ follows from conditioning on $\sigma(2k) = 2m$, and checking the resulting restrictions
    on $\sigma(i)=j$.
    Note that for the second sub-sum, we require $2(k-1) \geq 2m -j$ to satisfy the required constraints.
\end{proof}

The first part of this proposition shows that our Main Lemma is sufficient to calculate $B(m,i,j)$ for \emph{all} $i,j$, by using reflections.

\subsection{Preliminary calculations}
Throughout the rest of this section, we assume that $i \leq j \leq 2m-i$.
Let us further refine the recurrence in equation in~$(iii)$.
We need to ensure that all terms $B(n,p,q)$ in the above satisfy the inequalities $p \leq q \leq 2n -p$.
The first sum requires no modification.
The second sum must be split however, and a symmetry applied to one group of terms.
Since we need $i-1 \leq 2m-j \leq 2k-i-1,$ it follows that the terms with % $2k \geq 2m+1 +i -j$, or
$k \geq m + \frac{1}{2}(i+1-j)$ are already of the desired form.
For summands preceding this cut-off point, we will apply an antidiagonal reflection to the terms $B(*,*,*)$.
For notational convenience, let % $h = \lfloor (j-i-1)/2 \rfloor$
$h = \left\lfloor \frac{1}{2} (j - i - 1) \right\rfloor$ and let $a,$ $b$ be as in the Main Lemma.
% Note that $h \geq -1$, since $i \leq j$.
Then our triple sum becomes
\begin{align*}
  B(m,i,j) =& \sum_{k=1}^{a - 1} C_{k-1} \cdot B(m-k,i-2k,j)\\
  &+ \sum_{k=m - b +1}^{m - h - 1} C_{m-k} \cdot B(k-1,j-1-2(m-k),2k-i)\\
  &+ \sum_{k=m - h}^{m} C_{m-k} \cdot B(k-1,i-1,2m-j).
\end{align*}
We reindex the second two sums of the above, by letting $k \gets m - b + \ell$:
\begin{align}\label{eq:triple-sum-reindexed}
  B(m,i,j) =& \sum_{k=1}^{a - 1} C_{k-1} \cdot B(m-k,i-2k,j)\nonumber\\
  &+ \sum_{\ell = 1}^{b - h - 1} C_{b - \ell} \cdot B(m - b + \ell-1,j-1-2(b - \ell),2(m - b + \ell)-i)\\
  &+ \sum_{\ell = b - h}^{b} C_{b - \ell} \cdot B(m - b + \ell-1,i-1,2m-j).\nonumber
\end{align}
%Although this last equation may look complicated, it simplifies significantly when we specify a parity for $i$ and $j$ (see the proof of Theorem \ref{ below) .

We also state a simple lemma:
\begin{lemma}\label{lemma:p-sum-simplify}
  Let $r,s \geq 1$.
  Then
  \begin{equation*}
    p_{r,s} = C_{r+s-2} + \sum_{k=1}^{s-1} C_{k-1} \cdot p_{r,s-k}\..
  \end{equation*}
\end{lemma}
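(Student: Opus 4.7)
My plan is to prove Lemma~\ref{lemma:p-sum-simplify} by a direct algebraic calculation starting from the defining formula
\[
p_{r,s} \,=\, \sum_{k=1}^{s} C_{r+s-k-1}\. C_{k-1},
\]
and showing that the right-hand side of the asserted identity simplifies to this same expression via a Catalan convolution.

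First I would substitute the definition of $p_{r,s-k}$ into the right-hand side to obtain the triple sum
\[
C_{r+s-2} \,+\, \sum_{k=1}^{s-1}\. \sum_{j=1}^{s-k}\. C_{k-1}\. C_{j-1}\. C_{r+s-k-j-1}.
\]
Next I would reindex by setting $m = k+j$. For each fixed $m$ with $2 \leq m \leq s$, the variable $k$ ranges over $1 \leq k \leq m-1$, and the factor $C_{r+s-k-j-1} = C_{r+s-m-1}$ pulls out. The double sum then collapses to
\[
\sum_{m=2}^{s}\. C_{r+s-m-1} \sum_{k=1}^{m-1} C_{k-1}\. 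C_{m-k-1}.
\]

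The key step is to recognize the inner sum as an instance of the Catalan recurrence: after shifting indices one has $\sum_{k=1}^{m-1} C_{k-1} C_{m-k-1} = \sum_{i=0}^{m-2} C_i\. C_{m-2-i} = C_{m-1}$. Substituting this gives $\sum_{m=2}^{s} C_{r+s-m-1}\. C_{m-1}$, and finally I absorb the leftover $C_{r+s-2}$ as the $m=1$ contribution (since $C_{r+s-2}\. C_0 = C_{r+s-2}$) to arrive at $\sum_{m=1}^{s} C_{r+s-m-1}\. C_{m-1} = p_{r,s}$, as desired.

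There is no real obstacle; the computation is entirely routine. The only thing requiring care is the bookkeeping of index ranges under the reindexing $m=k+j$, so that the new summation domain $\{2 \leq m \leq s,\ 1 \leq k \leq m-1\}$ correctly matches the original triangular region $\{1 \leq k \leq s-1,\ 1 \leq j \leq s-k\}$. A quick small-case sanity check (for instance $r=s=2$, where both sides equal $3$) is worth doing before committing to the general manipulation.
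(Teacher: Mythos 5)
Your proof is correct and is exactly the argument the paper sketches in one line ("expanding each $p_{r,s-k}$ term and changing the order of summation"): you expand, regroup by $m=k+j$, and apply the Catalan recurrence to the inner convolution, which is precisely the intended computation with the details filled in. No issues.
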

\begin{proof}
  This follows from expanding each $p_{r,s-k}$ term and changing the order of summation.
  % This follows from changing the order of summation:
  % \begin{align*}
  %   \sum_{k=1}^{s-1} C_{k-1} \cdot p_{r,s-k} &= \sum_{k=1}^{s-1} \sum_{j=0}^{s-k-1} C_{r+s-(j+k+2)} C_j C_{k-1} \nonumber\\
  %   &= \sum_{m=3}^{s+1} C_{r+s-m} \sum_{j=0}^{m-3} C_j C_{m-j-3}\nonumber\\
  %   &= \sum_{m=3}^{s+1} C_{r+s-m} C_{m-2}\nonumber\\
  %   &= p_{r,s} - C_0C_{r+s-2},
  % \end{align*}
  % where the change of variables is given by $m=j+k+2$.
\end{proof}

\subsection{Proof of Main Lemma}
% \begin{proof}[Proof of Main Lemma]
\label{pf:proof-main-theorem-enumerative}
  Suppose our formula (\ref{eq:main-lemma-equation}) holds for all $B(n,p,q)$ satisfying $n<m$, and $p,q \in \{1, \dots ,2n\}$ with $p \leq q \leq 2n-p$.
  To show that the result holds for $B(m,i,j)$, we prove it separately for each of the four choices on the parity of $i$ and $j$.
  Let $i=2a$ and $j=2b$.
  In this case, we have
  \begin{equation*}
    a = \left\lceil \frac{i}{2} \right\rceil, \qquad b = \left\lfloor \frac{j}{2} \right\rfloor, \quad \text{and} \quad h = \left\lfloor \frac{(j-i-1)}{2} \right\rfloor = b - a -1\..
  \end{equation*}
  Thus, the triple sum in equation (\ref{eq:triple-sum-reindexed}) becomes
  \begin{align*}%\label{eq:triple-sum-even-even-non-inductive}
    B(m,2a,2b) =& \sum_{k=1}^{a-1} C_{k-1} \cdot B\bigl(m-k,2(a-k), 2b\bigr)\nonumber\\
    &+ \sum_{\ell=1}^{a} C_{b-\ell} \cdot B\bigl(m-b+\ell-1, 2(\ell-1)+1, 2(m-b-a+\ell)\bigr)\nonumber\\
    &+ \sum_{\ell=a+1}^{b} C_{b-\ell} \cdot B\bigl(m-b+\ell-1, 2a-1, 2(m-b)\bigr)\..
  \end{align*}
  By induction, this gives
  \begin{equation} \label{eq:triple-sum-even-even-inductive}
    \begin{aligned}
      B(m,2a,2b) =& \sum_{k=1}^{a-1} C_{k-1} \cdot \sum_{c=1}^{a-k-1}\sum_{d=1}^{a-k-c} p_{c,d} \cdot C_{b-c} \cdot C_{m-k-b-d} \\
      &+ \sum_{\ell=1}^{a} C_{b-\ell} \cdot \sum_{c=1}^{\ell - 2}\sum_{d=1}^{\ell - 1 - c} p_{c,d} \cdot C_{m-b-a+\ell - c} \cdot C_{a-1-d} \\
      &+ \sum_{\ell=a+1}^{b} C_{b-\ell} \cdot \sum_{c=1}^{a-1}\sum_{d=1}^{a-1-c} p_{c,d} \cdot C_{m-b-c} \cdot C_{\ell - 1 - d}\,.
    \end{aligned}
  \end{equation}

  % We now determine the coefficient on the number $C_{b-r}\cdot C_{m-b-s}$, for $1 \leq r,s \leq a-1$.
  For a fixed $r$ and $s$, let $c_1,c_2,c_3$ be the coefficients on $C_{b-r}\cdot C_{m-b-s}$ in the first, second and third sums in the above equation.
  % (\ref{eq:triple-sum-even-even-inductive}).
  Looking at the first sum we see that $c_1$ is only nonzero when $r \leq a-s$.
  In such a case, by considering $c = r$ and $k+d = s$ we see that the coefficient is given by
  \begin{equation*}
    c_1 = \sum_{k=1}^{s-1}p_{r,s-k} \cdot C_{k-1} = p_{r,s} - C_{r+s-2}\.,
  \end{equation*}
  with equality following from Lemma \ref{lemma:p-sum-simplify}.

  For the second sum, we consider $\ell = r$ and $a+c-r = s$.
  Since $c \geq 1$, the second sum contributes nothing for the cases when $r+s \leq a$.
  Thus for $r+s \leq a$ we have $c_2 = 0$, and for $r+s > a$ we instead get
  \begin{equation*}
    c_2 = \sum_{j=1}^{a-s} p_{r+s-a,j}\cdot C_{a-j-1}\..
  \end{equation*}

  Our third sum requires some manipulation.
  We rewrite the third sum in (\ref{eq:triple-sum-even-even-inductive}) as
  \begin{equation*}
    \sum_{c=1}^{a-1} \sum_{d=1}^{a-1-c} p_{c,d} \cdot C_{m-b-c} \cdot \left( \sum_{\ell=a+1}^{b} C_{b-\ell} \cdot C_{\ell - 1 - d} \right),
  \end{equation*}
  and here the inner sum can be re-expressed as a subtraction, through the standard Catalan convolution:
  \begin{equation*}
    \sum_{c=1}^{a-1} \sum_{d=1}^{a-1-c}  p_{c,d} \cdot C_{m-b-c} \cdot (C_{b-d} - C_{b-d-1}C_{0} - \ldots - C_{b-a}C_{a-d-1})\..
  \end{equation*}
  If we consider those summands where $c = s$ we obtain that for $r+s \leq a$ we have $c_3 = C_{r+s-2}$, and otherwise
  \begin{equation*}
    c_3 = -\left( \displaystyle\sum_{j=1}^{a-s} p_{s,j}\cdot C_{r-j-1} \right).
  \end{equation*}

  Combining these terms, we see that
  \begin{equation*}
    c_1 + c_2 + c_3 = \left\{
      \begin{array}{ll}
        p_{r,s} & \text{for $r + s \leq a$}\ts, \\
        [1em]
        \displaystyle\sum_{j=1}^{a-s} \left( p_{r+s-a,j}\cdot C_{a-j-1} - p_{s,j}\cdot C_{r-j-1} \right) & \text{for $r + s \geq a$}\ts.
      \end{array}
    \right.
  \end{equation*}
  Expand and rearrange each $p_{*,*}$ term in this latter sum, and observe that it is equal to~$0$.
  This completes the proof for the case when~$i$ and~$j$ are both even.
  The other three cases are similar and their proof is omitted.\footnote{Full proofs
  of these cases will appear in the first author's Ph.D.~thesis~\cite{Dok}.} \ $\sq$
% \end{proof}

\bigskip

\section{Asymptotic estimates}\label{sec:asymptotics}
% We now turn to asymptotic estimates of our numbers.

\subsection{Preliminaries}
Let $P(m,i,j)$ be the probability that $\sigma \in \mathcal{B}_{2m}$ satisfies $\sigma(i) = j$.
Then $P(m,i,j) = B(m,i,j)/C_m$. %looks ugly any other way, maybe equation-ize it?

% The summation formulas from our Main Lemma are identical up to some small shifts in indices and an extra term.
% Because of this, they will have the same asymptotic behavior.
% From here on out we will use only the even version of the formula:
Throughout the section, it will be convenient to pretend that $2 \alpha m, 2 \beta m$ are integers.
Further, we assume that $2 \alpha m, 2 \beta m$ are even, in order to restrict to one version of the formulas
in the equation~(\ref{eq:main-lemma-equation}).
Therefore, we write:
\begin{equation}\label{eq:probability}
  P(m,2 \alpha m, 2 \beta m) \, = \.\sum_{r=1}^{\alpha m - 1} \sum_{s=1}^{\alpha m - r} \frac{p_{r,s} \cdot C_{\beta m - r} \cdot C_{m -\beta m - s}}{C_m}\,. \tag{$**$}
\end{equation}

\begin{theorem}\label{t:spikes}
  We have the following corner probabilities:
  \begin{align*}
    P(m,1,1)\ts, \. P(m,1,2m-1)\ts, \. P(m,2,2m-1) \, & \rightarrow \, \frac{1}{4}\.,\\
    P(m,1,2) \. = \. P(m,1,2m) \. = \. P(m,2,2) \. &= \. 0\.,
  \end{align*}
  and the remaining corner cases are given by symmetries.
\end{theorem}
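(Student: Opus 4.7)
The plan is to compute each of the six corner counts $B(m,i,j)$ in closed form directly from the Proposition of Section~\ref{sec:proof-mainlemma}, then pass to the limit via the elementary ratio $C_{m-1}/C_m\to 1/4$. No new asymptotic machinery beyond careful use of parts~$(ii)$ and~$(iii)$ of that Proposition is needed; the Main Lemma itself is not invoked.

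I would first dispose of the three vanishing cases. The counts $B(m,1,2)$ and $B(m,1,2m)$ are zero immediately by part~$(ii)$, since in each of them the second index is even. For $B(m,2,2)$ with $m\ge 2$, I would specialize the recurrence in part~$(iii)$ with $i=j=2$: since $\lceil i/2\rceil = 1$ the first sum is empty, while the second sum collapses to the single term $C_0\cdot B(m-1,1,2m-2)$, which again vanishes by~$(ii)$ as $2m-2$ is even.

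Next, I would show that the three nontrivial corner counts all equal $C_{m-1}$. For $B(m,1,1)$ and $B(m,1,2m-1)$, part~$(ii)$ gives $C_0\cdot C_{m-1}$ and $C_{m-1}\cdot C_0$, respectively. For $B(m,2,2m-1)$ I would apply part~$(iii)$ with $i=2$, $j=2m-1$: the first sum is empty, and the second reduces to
\begin{equation*}
  \sum_{k=2}^{m} C_{m-k}\cdot B(k-1,1,1) \, = \, \sum_{k=2}^{m} C_{m-k}\cdot C_{k-2} \, = \, \sum_{\ell=0}^{m-2} C_\ell\cdot C_{m-2-\ell} \, = \, C_{m-1}\.,
\end{equation*}
where the middle equality uses $B(k-1,1,1)=C_{k-2}$ from the special case noted within~$(ii)$, and the final step is the standard Catalan convolution.

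Finally, the limit value $1/4$ follows from the elementary identity
\begin{equation*}
  \frac{C_{m-1}}{C_m} \, = \, \frac{m+1}{2(2m-1)} \, \longrightarrow \, \frac{1}{4}\.,
\end{equation*}
which is equivalent to Stirling's estimate~\eqref{eq:catalan-asymptotic}. The corners not listed in the statement are then recovered from the three computed ones via the reflections in part~$(i)$. I do not foresee a genuine obstacle; the main care is in correctly applying the Proposition's parts, namely respecting the range restrictions on $(i,j)$ when invoking part~$(iii)$ and distinguishing the two parity cases in part~$(ii)$.
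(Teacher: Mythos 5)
Your computations are all correct, and your route is genuinely different in its source of the exact corner counts. The paper's own proof is a one-sentence observation: it reads the corner values straight off the closed formula \eqref{eq:main-lemma-equation} of the Main Lemma (each is $0$ or $C_{m-1}$) and then uses $C_{m-1}/C_m \to 1/4$. You never touch the Main Lemma and instead work from the Proposition of Section~\ref{sec:proof-mainlemma}: part $(ii)$ handles the four $i=1$ corners, and the recurrence $(iii)$ gives $B(m,2,2) = C_0\cdot B(m-1,1,2m-2) = 0$ and, via $B(k-1,1,1)=C_{k-2}$ and the Catalan convolution $\sum_{k=2}^m C_{m-k}C_{k-2}=C_{m-1}$, also $B(m,2,2m-1)=C_{m-1}$ (a check at $m=2$: $\Bax_4=\{1324,3412\}$ confirms $B(2,2,3)=1=C_1$). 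What your route buys is a small amount of rigor at exactly the delicate spots: the corners $(2,2m-1)$ and $(1,2m)$ lie on the antidiagonal $i+j=2m+1$, so they violate the Main Lemma's hypothesis $i\le j\le 2m-i$ and are not reachable from it by the reflections of part $(i)$ either; the formula \eqref{eq:main-lemma-equation} happens to give the right values there, but strictly the paper's ``at once from the Main Lemma'' needs a word of justification, whereas parts $(ii)$ and $(iii)$ apply verbatim in your argument. The cost is the extra convolution computation, which the paper avoids. Both proofs finish the same way; your exact identity $C_{m-1}/C_m=(m+1)/\bigl(2(2m-1)\bigr)$ is in fact more elementary than (not ``equivalent to'') the Stirling-based estimate \eqref{eq:catalan-asymptotic}, and of course suffices. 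The only cosmetic caveat is that the zero equalities, as in the paper, should be read for $m\ge 2$ (e.g.\ $P(1,2,2)=1$), which does not affect the asymptotic statement.
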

\begin{proof}
  It follows at once from the formulas in the Main Lemma that the probabilities in these cases are either~$0$,
  or $C_{m-1}/C_m \rightarrow 1/4$\ts.
\end{proof}

Before proving Theorem \ref{thm:asymptotic}, we will need to take a closer look at the partial Catalan convolutions $p_{r,s}$.
Note that $C_{r+s-2} \leq p_{r,s} \leq C_{r+s-1}$.
% It follows that the asymptotic growth of $p_{r,s}$ is up to a factor of $4$ the same as that of the Catalan numbers it lies between.
In fact, the following stronger result holds.
\begin{lemma}\label{lemma:partial-convo-approx}
  Fix $\rho \in (2/3, 1)$.
  % $\rho \in \left( \frac{2}{3}, 1 \right)$.
  Suppose that $k$ satisfies
  \begin{equation*}
    n^\rho < k < n - n^\rho\ts.
  \end{equation*}
  Then, for every $\varepsilon > 0$,
  \begin{equation*}
    % (1-\varepsilon) \cdot \frac{C_{n-1}}{2} \leq p_{r,s} \leq (1+\varepsilon) \cdot \frac{C_{n-1}}{2},
    (1-\varepsilon) C_{n-1} \leq 2 \cdot p_{n-k,k} \leq (1+\varepsilon) C_{n-1}\.,
  \end{equation*}
  for all $n$ large enough.
\end{lemma}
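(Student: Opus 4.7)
\medskip

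\noindent
\emph{Proof plan.} The strategy is to view $p_{n-k,k}$ as a partial sum of the full Catalan convolution $C_{n-1} = \sum_{i=0}^{n-2} C_i\ts C_{n-2-i}$, and exploit the reflection symmetry $i\leftrightarrow n-2-i$ to show that, whenever $k$ lies well inside $[0,n]$, the partial sum captures asymptotically half of the total. I would first treat the case $k \leq n/2$, and deduce the range $k > n/2$ from the symmetric identity $p_{n-k,k}+p_{k,n-k}=C_{n-1}+C_{k-1}\ts C_{n-k-1}$ applied with $k'=n-k\in (n^\rho,\ts n/2)$. For $k\leq n/2$, I would reindex the definition of $p_{n-k,k}$ by $i=j-1$ to rewrite $p_{n-k,k}=\sum_{i=0}^{k-1}C_i\ts C_{n-2-i}$, split $C_{n-1}$ into the three blocks $\{0,\dots,k-1\}$, $\{k,\dots,n-k-1\}$, $\{n-k,\dots,n-2\}$, and reindex the third block via $i\mapsto n-2-i$; after combining one arrives at the exact identity
\[
  2\ts p_{n-k,k}\,=\,C_{n-1}\,+\,C_{k-1}\ts C_{n-k-1}\,-\,\sum_{i=k}^{n-k-1} C_i\ts C_{n-2-i}\,.
\]
It then suffices to show that both ``error'' terms on the right are $o(C_{n-1})$, uniformly in $k\in (n^\rho,\ts n/2]$.

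For the boundary term, the Catalan asymptotic~(\ref{eq:catalan-asymptotic}) gives
\[
  \frac{C_{k-1}\ts C_{n-k-1}}{C_{n-1}}\,\sim\,\frac{1}{4\sqrt{\pi}}\cdot \frac{(n-1)^{3/2}}{[(k-1)(n-k-1)]^{3/2}}\,,
\]
which on the allowed range is $O(n^{-3\rho/2})=o(1)$. For the middle sum, apply~(\ref{eq:catalan-asymptotic}) termwise (uniformly valid, since $i\geq n^\rho$) to obtain
\[
  \sum_{i=k}^{n-k-1} C_i\ts C_{n-2-i}\,\sim\,\frac{4^{n-2}}{\pi}\cdot \sum_{i=k}^{n-k-1}\frac{1}{[i(n-2-i)]^{3/2}}\,,
\]
and bound the right-hand sum by the Riemann integral $\frac{1}{n^2}\int_{k/n}^{1-k/n}[t(1-t)]^{-3/2}\ts dt$, which can be computed exactly using the antiderivative $2(2t-1)/\sqrt{t(1-t)}$ and is $O(n^{-3/2}k^{-1/2})$. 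Dividing by $C_{n-1}$ then yields a contribution of $O(k^{-1/2})=O(n^{-\rho/2})=o(1)$.

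The main technical obstacle is to make all these approximations uniform on the whole range $n^\rho<k\leq n/2$. The Stirling relative error in $C_i$ is $O(1/i)$, which is controllable only because $i\geq n^\rho$; the sum-to-integral comparison for the U-shaped summand must be carried out on each monotone half separately; and every secondary error has to be absorbable into the single $\varepsilon$ of the statement. The lower bound $\rho>2/3$ is presumably what ensures each of these corrections is comfortably swamped by the leading $O(k^{-1/2})$ bound, uniformly in~$k$.
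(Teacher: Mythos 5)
The paper gives no argument for this lemma at all (the proof is explicitly omitted), so there is no ``paper proof'' to compare against; judged on its own, your proposal is correct and essentially complete. The reindexing $p_{n-k,k}=\sum_{i=0}^{k-1}C_i\ts C_{n-2-i}$ is right, reflecting the block $\{n-k,\dots,n-2\}$ of the full convolution $C_{n-1}=\sum_{i=0}^{n-2}C_iC_{n-2-i}$ gives exactly $p_{n-k,k}-C_{k-1}C_{n-k-1}$, and hence your identity $2\ts p_{n-k,k}=C_{n-1}+C_{k-1}C_{n-k-1}-\sum_{i=k}^{n-k-1}C_iC_{n-2-i}$ holds for $k\le n/2$ (it checks on small cases, e.g.\ $n=5$, $k=2$ gives $14=14+2-2$). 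The symmetric identity $p_{n-k,k}+p_{k,n-k}=C_{n-1}+C_{k-1}C_{n-k-1}$ is also correct and reduces $k>n/2$ to the first case, since then $k'=n-k\in(n^\rho,n/2)$ and $C_{k-1}C_{n-k-1}=o(C_{n-1})$. Both error estimates are right and uniform on the admissible range: every Catalan index involved is at least $n^\rho-1$, so (\ref{eq:catalan-asymptotic}) applies with uniformly small relative error, the boundary term is $O(n^{-3\rho/2})$, your antiderivative $2(2t-1)/\sqrt{t(1-t)}$ of $[t(1-t)]^{-3/2}$ is correct, and the middle block contributes $O(k^{-1/2})=O(n^{-\rho/2})$ relative to $C_{n-1}$ (with the end-point corrections from the U-shaped summand of even smaller order, as you note). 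One small correction to your closing remark: the hypothesis $\rho>2/3$ plays no role in this lemma --- your estimates only need $\rho>0$; the restriction $\rho\in(2/3,1)$ is imposed because of how the lemma is used elsewhere (the estimates for the regions $\textsc{C}$ and $\textsc{D}$), so nothing here needs to be ``swamped'' by that particular threshold.
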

Letting $k=s, r = n-k-1$ gives this above inequality.
We omit the proof.

% We will prove Theorem \ref{thm:asymptotic} by applying Lemma \ref{lemma:partial-convo-approx} to the above% formula% (\ref{eq:probability})
% , and showing that the summands to which the Lemma does not apply have a negligible contribution.

\subsection{Partitioning the summation}
Observe that the double sum in equation (\ref{eq:probability}) is over a
triangular set of points $\textsc{T}$, with $(r,s)\in T$ satisfying
\begin{equation*}
  1 \leq r \leq \alpha m - 1 \qquad \text{and} \quad 1 \leq s \leq \alpha m - r\..
\end{equation*}
We partition $\textsc{T}$ into the four sets $\textsc{A},\textsc{B},\textsc{C}$ and $\textsc{D}$ as follows (see Figure \ref{fig:tri-region}).
\begin{align*}
  \textsc{A} &= \{(r,s) : (r+s) \leq \alpha m^\delta\} \\
  \textsc{B} &= T \setminus (A \cup C \cup D) \\
  \textsc{C} &= \{(r,s) : s \leq (r+s)^\rho \} \\
  \textsc{D} &= \{(r,s) : s \geq r+s - (r+s)^\rho \} \\
\end{align*}
We show that the sum of terms in~(\ref{eq:probability}) corresponding
to each of the sets $\textsc{A},\textsc{C}$ and $\textsc{D}$ are dominated
by the those from set~$\textsc{B}$.
This will allow us to apply Lemma~\ref{lemma:partial-convo-approx}
and approximation in~(\ref{eq:catalan-asymptotic}) to terms whose indices
lie in set~$\textsc{B}$.
\begin{figure}[h]
  \centering
  \begin{tikzpicture}[domain=1:3,smooth,variable=\x]
    %big triangle
    \draw[thick] (0,0)--(0,3)--(3,0)--(0,0);

    % small regions
    \draw[thick] (0,0)--(0,1)--(1,0)--(0,0);

    % axis
    \draw[->]    (0,3) -- (0,3.5);
    \draw[->]    (3,0) -- (4,0);

    % axis labels
    \draw (4,0)[right] node {$r$};
    \draw (0,3.5)[above] node {$s$};

    % big labels
    \draw (0,3)[left]    node {$\alpha m -1$};
    \draw (3,0)[below]   node {$\alpha m -1$};

    % small region labels
    \draw (0,1)[left]    node {$\alpha m^\delta$};
    \draw (1,0)[below]   node {$\alpha m^\delta$};

    % region labels
    \draw (0.3,0.3)      node {$\textsc{A}$};
    \draw (1,1)          node {$\textsc{B}$};
    \draw (2,0.23)       node {$\textsc{C}$};
    \draw (0.2,2)        node {$\textsc{D}$};
    \draw (2.7,1)        node {$\textsc{T}$};

    % origin corner labels
    \draw (0,0.1)[left]  node {$1$};
    \draw (0.1,0)[below] node {$1$};

    % the strips to cut off
    \draw[thick] plot ({\x - 0.4*sqrt((2*\x-1.5)/2)}, {0.4*(sqrt((2*\x-1.5)/2))});
    \draw[thick] (1,0)--(3,0)--(3-0.5865,0.5865);
    \begin{scope}[xscale=-1,rotate=90]
      \draw[thick] plot ({\x - 0.4*sqrt((2*\x-1.5)/2)}, {0.4*(sqrt((2*\x-1.5)/2))});
      \draw[thick] (1,0)--(3,0)--(3-0.5865,0.5865);
    \end{scope}

  \end{tikzpicture}
  \caption{The summation points, partitioned into four sets.}
  \label{fig:tri-region}
\end{figure}
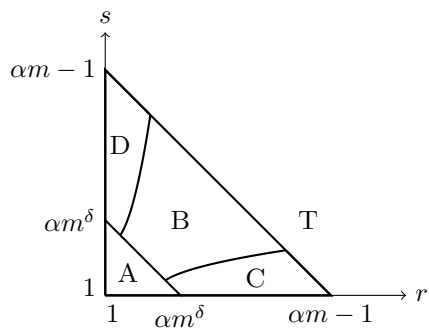

We reduce the problem of summation over $\textsc{T}$ to an integral
over the right triangle $\Delta$, given by $x,y \geq 0$ and $x+y < \alpha$.
For that we similarly define the regions $\textbf{A},\textbf{B},\textbf{C}$ and $\textbf{D}$ to be all $(x,y)$ in $\Delta$ so that $(\lceil mx \rceil , \lceil my \rceil)$ is a point in $\textsc{A},\textsc{B},\textsc{C}$ or $\textsc{D}$, respectively.

\subsection{Estimates for set ${\normalfont \textsc{A}}$}\label{sec:set-A-is-small}
Each of the three sets $\textsc{A},\textsc{C}$ and $\textsc{D}$ contains a relatively small number of terms.
We will show that the sum of terms in (\ref{eq:probability}) corresponding to these sets grows slowly, so that their contribution is dwarfed by that of set $\textsc{B}$.
\begin{lemma}\label{lemma:A-is-small}
  Let $\delta < 1/4$.  %$\frac{1}{4}$.
  Then the sum of terms in $($\ref{eq:probability}$)$ corresponding to ${\normalfont \textsc{A}}$ is $o \! \left( \frac{1}{m} \right)$.
\end{lemma}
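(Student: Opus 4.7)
The plan is to combine the crude upper bound $p_{r,s} \le C_{r+s-1}$ (noted just before the lemma statement) with the Catalan asymptotic~(\ref{eq:catalan-asymptotic}) to majorize each summand of~(\ref{eq:probability}) indexed by $\textsc{A}$, and then just count lattice points.

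First, I would verify that throughout $\textsc{A}$ the estimate~(\ref{eq:catalan-asymptotic}) applies uniformly to the ``long'' Catalan factors. Since $(r,s)\in\textsc{A}$ forces $r,s\le \alpha m^\delta$ with $\delta<1/4$, we have $r,s = o(m)$ uniformly in $\textsc{A}$, so
\begin{equation*}
C_{\beta m-r} \,\sim\, \frac{4^{\beta m-r}}{\sqrt{\pi}\,(\beta m)^{3/2}}\,, \qquad C_{(1-\beta)m-s} \,\sim\, \frac{4^{(1-\beta)m-s}}{\sqrt{\pi}\,((1-\beta)m)^{3/2}}\,,
\end{equation*}
and $C_m \sim 4^m/(\sqrt{\pi}\,m^{3/2})$. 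Substituting and using $p_{r,s}\le C_{r+s-1}\sim 4^{r+s-1}/(\sqrt{\pi}(r+s-1)^{3/2})$, the exponentials of~$4$ cancel completely, giving
\begin{equation*}
\frac{p_{r,s}\cdot C_{\beta m-r}\cdot C_{(1-\beta)m-s}}{C_m} \,=\, O\!\left(\frac{1}{(r+s)^{3/2}\,m^{3/2}}\right),
\end{equation*}
where the implicit constant depends only on $\beta$ (and is bounded since $0<\alpha<\beta<1-\alpha$).

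Next I would just sum this bound over $\textsc{A}$. The number of pairs $(r,s)$ with $r,s\ge 1$ and $r+s=k$ is exactly $k-1$, so
\begin{equation*}
\sum_{(r,s)\in\textsc{A}} \frac{p_{r,s}\cdot C_{\beta m-r}\cdot C_{(1-\beta)m-s}}{C_m} \,=\, O\!\left(\frac{1}{m^{3/2}}\right)\sum_{k=2}^{\lfloor\alpha m^\delta\rfloor} \frac{k-1}{(k-1)^{3/2}} \,=\, O\!\left(\frac{m^{\delta/2}}{m^{3/2}}\right),
\end{equation*}
using $\sum_{k=1}^N k^{-1/2}=O(\sqrt{N})$. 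Since $\delta<1/4$, the right-hand side is $O(m^{\delta/2-3/2})=O(m^{-11/8})=o(1/m)$, as desired.

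There is no real obstacle here; the calculation is clean because the ratio $p_{r,s}/C_m$ contributes the factor $4^{-((1-\beta)m-r) - (\beta m - s) + (r+s)}$ worth of Catalan exponentials that exactly balance with $C_{\beta m-r}C_{(1-\beta)m-s}$. The only point that deserves a line of care is the uniformity of the asymptotic $C_{\beta m-r}\sim 4^{\beta m-r}/[\sqrt{\pi}(\beta m)^{3/2}]$ across $(r,s)\in\textsc{A}$, which is ensured by the hypothesis $\delta<1/4<1$ (so that $r/m, s/m \to 0$ uniformly). The exponent threshold $\delta<1/4$ is used only to guarantee $\delta/2-3/2<-1$, and in fact any $\delta<1$ would suffice for the uniformity step, while any $\delta<1$ would even give $o(1/m)$ here — the tighter constraint $\delta<1/4$ will be needed in the analysis of the sets $\textsc{C}$ and $\textsc{D}$.
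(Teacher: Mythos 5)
Your proof is correct, but it takes a genuinely different (and finer) route than the paper. The paper's argument is cruder: it bounds \emph{every} summand of (\ref{eq:probability}) with $(r,s)\in\textsc{A}$ by the single largest term, the one with $r=s=1$, which is asymptotic to $\frac{1}{8\sqrt{\pi}\,\beta(1-\beta)}\cdot m^{-3/2}$, and then multiplies by the number of lattice points, $|\textsc{A}|\le(\alpha m^{\delta})^{2}$, obtaining $O\bigl(m^{2\delta-3/2}\bigr)$; this is exactly where the hypothesis $\delta<1/4$ is needed. You instead retain the $(r+s)^{-3/2}$ decay coming from $p_{r,s}\le C_{r+s-1}$ and sum along the diagonals $r+s=k$, which is essentially the technique the paper reserves for sets $\textsc{C}$ and $\textsc{D}$ in Lemma~\ref{lemma:CD-are-small}; your version is slightly longer but sharper, since it proves the conclusion for every $\delta<1$, and your uniformity point about applying (\ref{eq:catalan-asymptotic}) to $C_{\beta m-r}$ and $C_{(1-\beta)m-s}$ is handled correctly. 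Two small corrections: your closing remark about where $\delta<1/4$ is ``really'' needed is backwards --- in the paper the sets $\textsc{C}$ and $\textsc{D}$ are controlled by the parameter $\rho\in(2/3,1)$, not by $\delta$, and $\delta<1/4$ is imposed precisely so that the paper's count-times-maximum bound for $\textsc{A}$ closes; under your estimate it could be relaxed. Also, the exponent bookkeeping in your final parenthetical is off (the factor contributed by $p_{r,s}/C_m$ is $4^{(r+s-1)-m}$, which balances $4^{(\beta m-r)+((1-\beta)m-s)}$), but the displayed computation preceding it is the correct one, so this is only a typo.
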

\begin{proof}
  The largest individual term in (\ref{eq:probability}) is obtained when $r=s=1$. We get:
  \begin{equation*}
    \frac{2 C_{\beta m -1} C_{m -  \beta m   - 1}}{C_m} \,\sim \, \frac{1}{\beta(1-\beta)8\sqrt{\pi}} \cdot \frac{1}{m^{3/2}}\,.
  \end{equation*}
  We bound each of the summands from $\textsc{A}$ by this, so that their total is at most
  \begin{align*}
    & |\textsc{A}| \cdot \frac{2 C_{ \beta m   -1} C_{m -  \beta m   - 1}}{C_m}
    \, = \, (\alpha m^\delta)^2 \cdot \frac{C_{\beta m   -1} C_{m - \beta m - 1}}{C_m} \\
    & \qquad \ \ \sim \, \frac{1}{8\ts \sqrt{\pi}\ts\beta(1-\beta)} \cdot \frac{(\alpha m^\delta)^2}{m^{3/2}}
    \, = \, \frac{\alpha^2}{8\sqrt{\pi}\beta(1-\beta)} \cdot \frac{1}{m^{3/2 - 2\delta}}
    \, = \, o\left( \frac{1}{m} \right),
  \end{align*}
  where the last equality follows from $\delta < 1/4$\ts.
\end{proof}

\subsection{Estimates for sets ${\normalfont \textsc{C} \ and \ \textsc{D}}$}\label{sec:sets-CD-are-small}
\begin{lemma}\label{lemma:CD-are-small}
  Let $\rho \in (2/3, 1)$.
  Then the sum of terms in $($\ref{eq:probability}$)$ corresponding to set $\textsc{C}$ is $o \! \left( \frac{1}{m} \right)$.
  The same also holds for the set~$\textsc{D}$.
\end{lemma}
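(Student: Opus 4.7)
My plan is to combine the trivial upper bound $p_{r,s}\leq C_{r+s-1}$ (noted in the paper just before Lemma~\ref{lemma:partial-convo-approx}, since $p_{r,s}$ is a partial sum of the full Catalan convolution equal to $C_{r+s-1}$) with the uniform Catalan estimate $C_n = \Theta(4^n/n^{3/2})$. Because $(r,s) \in \textsc{T}$ forces $r,s \leq \alpha m$, and because $\alpha < \beta < 1-\alpha$, the quantities $\beta m - r \geq (\beta-\alpha) m$ and $m - \beta m - s \geq (1-\alpha-\beta) m$ stay of order~$m$ throughout the entire triangle. Combining these ingredients, for every $(r,s) \in \textsc{T}$ I expect the pointwise bound
\begin{equation*}
  \frac{p_{r,s}\cdot C_{\beta m - r}\cdot C_{m-\beta m -s}}{C_m}\, = \, O\!\left(\frac{1}{(r+s)^{3/2}\, m^{3/2}}\right),
\end{equation*}
where the implied constant depends only on $\alpha,\beta$. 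This is the termwise estimate I will sum over $\textsc{C}$ and $\textsc{D}$.

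For $\textsc{C}$, I would layer the summation by $n = r+s$. The defining constraint $s \leq n^\rho$ forces at most $n^\rho$ pairs on the diagonal $r+s = n$, and $n \geq 2$, so
\begin{equation*}
  \sum_{(r,s)\in \textsc{C}} \frac{p_{r,s}\cdot C_{\beta m - r}\cdot C_{m-\beta m - s}}{C_m}
  \, = \, O\!\left(\frac{1}{m^{3/2}}\sum_{n=2}^{\alpha m}\frac{n^\rho}{n^{3/2}}\right).
\end{equation*}
Since $3/2 - \rho \in (1/2,\, 5/6)\subset (0,1)$, the inner sum is $\asymp (\alpha m)^{\rho - 1/2}$, and the full bound becomes $O(m^{\rho - 2})$. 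The hypothesis $\rho < 1$ then yields $m^{\rho - 2} = o(m^{-1})$, as required.

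For $\textsc{D}$, the condition $s \geq n - n^\rho$ is the symmetric statement $r \leq n^\rho$, so again the set is thin with at most $n^\rho$ points per diagonal. The pointwise bound is unchanged (the largest $s$ on $\textsc{D}$ is still at most $\alpha m - 1$, so $m - \beta m - s$ remains $\Theta(m)$), and the identical calculation yields $O(m^{\rho - 2}) = o(m^{-1})$. The main subtlety I foresee is that Lemma~\ref{lemma:partial-convo-approx} is useless on $\textsc{C}\cup \textsc{D}$ by design --- its hypothesis $n^\rho < s < n - n^\rho$ fails precisely there --- so I must content myself with the crude bound $p_{r,s}\leq C_{r+s-1}$. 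Fortunately no sharper asymptotic is needed: the saving comes entirely from the \emph{thinness} of these regions ($n^\rho$ points per diagonal instead of $n$), which is exactly the reason the partition in Figure~\ref{fig:tri-region} is set up this way.
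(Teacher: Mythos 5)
Your proof is correct, and it handles the crucial estimation step differently from the paper. Both arguments start identically: bound $p_{r,s}\leq C_{r+s-1}$ (valid since Lemma~\ref{lemma:partial-convo-approx} is inapplicable on $\textsc{C}\cup\textsc{D}$, as you note) and invoke the two-sided Catalan estimate $C_n=\Theta(4^n/n^{3/2})$, with the $4$-powers cancelling against $C_m$. From there the paper converts the resulting sum into a Riemann sum for
$\frac{1}{m}\iint_{\textbf{C}}\frac{dx\,dy}{[(x+y)(\beta-x)(1-\beta-y)]^{3/2}}$
and concludes because the integrand is integrable over the full triangle while $\mathrm{Area}(\textbf{C})\to 0$; your route instead exploits that $\beta m-r$ and $(1-\beta)m-s$ are uniformly $\Theta(m)$ on all of $\textsc{T}$ (this is exactly where $\alpha<\beta<1-\alpha$ enters), yielding the pointwise bound $O\bigl((r+s)^{-3/2}m^{-3/2}\bigr)$, and then counts at most $n^{\rho}$ lattice points on each diagonal $r+s=n$ of $\textsc{C}$ (and, by the symmetric constraint $r\leq n^{\rho}$, of $\textsc{D}$). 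Summing $n^{\rho-3/2}$ up to $\alpha m$ gives the explicit rate $O(m^{\rho-2})=o(1/m)$. Your version is more elementary and more quantitative: it avoids justifying the sum-to-integral passage and the ``convergent integral over a vanishing-area region'' step (which in the paper implicitly uses absolute continuity of the integral), and it produces a power saving $m^{\rho-2}$ rather than a bare $o(1/m)$; in fact it only needs $\rho<1$, not $\rho>2/3$. What the paper's approach buys in exchange is uniformity of machinery — the same integral comparison is reused verbatim for region $\textsc{B}$ in the proof of Theorem~\ref{thm:asymptotic}, so treating $\textsc{C}$ and $\textsc{D}$ the same way keeps the whole asymptotic analysis in one framework. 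One small point to make explicit if you write this up: the pointwise bound needs a lower bound $C_m\geq c\,4^m/m^{3/2}$ in the denominator as well as upper bounds in the numerator, both of which follow from~(\ref{eq:catalan-asymptotic}).
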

\begin{proof}  Choose a constant $M$ large enough so that
  \begin{equation*}
    C_n \, \leq \, M \cdot \frac{4^n}{(n+1)^{3/2}}\,, \qquad \text{for all} \, \ n \geq 0\..
  \end{equation*}
  Denote by $\Psum{C}$ the sum of terms in (\ref{eq:probability}) corresponding to~$\textsc{C}$, i.e.
  \begin{equation*}
    \Psum{C} \, = \,
    \sum_{(r,s)\in \textsc{C}} \frac{p_{r,s} \cdot C_{\beta m  - r} \cdot C_{m - \beta m  - s}}{C_m}
    \, \leq \.\sum_{(r,s)\in \textsc{C}} \frac{C_{r+s-1} \cdot C_{\beta m  - r} \cdot C_{m - \beta m  - s}}{C_m}\,,
  \end{equation*}
  where the last inequality follows from \ts $p_{r,s} \leq C_{r+s-1}$.
  We use formula (\ref{eq:catalan-asymptotic}) as well as our choice of~$M$ to obtain
  \begin{align*}
    \frac{4 \Psum{C}}{\sqrt{\pi} \ts M^3} \, &\leq  \,
    \sum_{(r,s)\in \textsc{C}} \frac{m^{3/2}}{[(r+s)(\beta m - r + 1)((1-\beta)m-s+1)]^{3/2}}\,.
     \end{align*}
  And as $m\rightarrow \infty$ this last sum is asymptotically equivalent to
  \begin{equation*}
    \frac{1}{m} \iint_{\textbf{C}} \frac{dx\ dy}{[(x+y) (\beta - x) (1-\beta - y)]^{3/2}}\,.
  \end{equation*}
  Now observe that the above integral converges, and that $Area(\textbf{C}) \rightarrow 0$
  as $m\rightarrow \infty$.  Thus, $\Psum{C}=o\left( \frac{1}{m} \right)$, as desired.
  The proof for $\textsc{D}$ follows verbatim from the above argument and the bound
  $p_{r,s} \leq C_{r+s-1}$.
\end{proof}

\begin{remark}
  At the beginning of this section, we assumed that $2\alpha m, 2\beta m$ are even integers.
  For the other choices of parity, the sum (\ref{eq:probability}) differs in the range of indices $(r,s)$, and, depending on the chosen parity of $2\alpha m$, the addition of a $O(m^{-3/2})$ term, coming from the first term in the summation (\ref{eq:main-lemma-equation}) from the main lemma.
  % However, the differences involve only a finite shift in the indices on our $p_{r,s}$ numbers and Catalan numbers, and possibly the addition of a single $O\left( m^{-3/2} \right)$ term.
  The above arguments and the following proof of Theorem \ref{thm:asymptotic} can be adapted with little difficulty to handle these differences.
\end{remark}

\medskip

\subsection{Proof of Theorem \ref{thm:asymptotic}}\label{sec:proof-of-asymptotic}
  Denote by $\Psum{B}$ the sum of terms from (\ref{eq:probability}) corresponding to set $\textsc{B}$, i.e.
  \begin{equation*}
    \Psum{B} \, = \. \sum_{(r,s) \in \textsc{B}}\frac{p_{r,s} \cdot C_{\beta m -r} \cdot C_{m - \beta m -s}}{C_m}\,.
  \end{equation*}

We need to prove that $\Psum{B}\sim \frac{1}{m} \varphi(\alpha,\beta)$.
  This follows from the construction of $\textsc{B}$.
  First, note that for $(r,s) \in \textsc{B}$, the indices $r+s,~\beta m - r$ and $(1-\beta) m - s \to \infty$
  as $m\to \infty$.
  Thus, by Lemma \ref{lemma:partial-convo-approx} we have
  \begin{equation*}
    \Psum{B} \, \sim \, \frac{1}{2} \sum_{(r,s)\in \textsc{B}} \frac{C_{r+s-1} \cdot C_{\beta m -r} \cdot C_{m - \beta m -s}}{C_m}\,.
  \end{equation*}
  From formula (\ref{eq:catalan-asymptotic}) we obtain
  \begin{align*}
    \Psum{B} \, &\sim \,\frac{1}{8\pi} \. \sum_{(r,s)\in \textsc{B}} \. \frac{m^{3/2}}{[(r+s)(\beta m - r)((1-\beta)m - s)]^{3/2}}\\
    &\sim \, \frac{1}{8\pi m} \. \iint_{\textbf{B}} \. \frac{dx\ dy}{[(x+y)(\beta-x)(1-\beta-y)]^{3/2}}\,.
  \end{align*}
  As $m\to \infty$, the region $\textbf{B}$ expands to fill the entire triangle~$\Delta$, so we obtain
  \begin{equation*}
    \Psum{B} \, \sim \, \frac{1}{8\pi m}\. \int_0^\alpha
    \int_0^{\alpha - y} \frac{dx\ dy}{[(x+y)(\beta-x)(1-\beta-y)]^{3/2}} \, = \, \frac{\varphi(\alpha,\beta)}{m}\,.
  \end{equation*}
  Finally, by Lemmas \ref{lemma:A-is-small} and \ref{lemma:CD-are-small}, the probability
  $P(m,2\alpha m, 2\beta m) \sim \Psum{B}$.    This completes the proof. \ $\sq$

\bigskip

\section{Final remarks and open problems}
\label{sec:final-remarks}

\subsection{}
There is a large literature on the asymptotic behavior of Catalan structures,
too large for us to summarize.  We refer to~\cite{FS,Odl,PW} for a comprehensive
introduction to asymptotic methods, and to~\cite{Pit} for strongly related
discrete probability results.

\subsection{}\label{s:fin-bax}
The choice of doubly alternating Baxter permutations is a part of a general
program of study of pattern-avoiding permutations. Baxter permutations are
defined to avoid two (generalized) patterns, and widely studied in the literature
(see e.g.~~\cite{ackerman-et, bonichon-et, dulucq-guibert-baxter,FFNO},
in part due to their connection to plane bipolar orientations, tilings~\cite{Korn},
and other combinatorial objects (see~\cite[$\S 2.2$]{Kit} for the introduction
and references).  As mentioned in the introduction, the alternating permutations
are classical in the own right; we refer to~\cite{Sta} for a comprehensive survey
on their role in Enumerative Combinatorics, to~\cite{DW} for a recent detailed
probabilistic study, and to~\cite[$\S 6.1$]{Kit} for connections to other
pattern-avoiding permutations.

\subsection{}\label{s:fin-3}
The study of random pattern avoiding permutations by means of the limit shape was
recently introduced by Miner and the second author~\cite{miner-pak}.  The authors
obtained highly detailed information for {\bf 123} and {\bf 213} patterns,
each a classical Catalan structure.  In both cases the limit surface is degenerate
as most permutation matrix entries concentrate around the anti-diagonal, and the
interesting behavior occurs in the micro-scale near the anti-diagonal with various
phase transitions.  We refer to~\cite{AM,ML} for strongly related results, and
some interesting extensions for larger patterns.

\subsection{}
Following the ideas in~\cite{ML} and~\cite{miner-pak}, one can ask whether for a
given (generalized) pattern there is always a limit surface, and whether
it is degenerate.  Since the set of bistochastic matrices is a compact (in fact, the
\emph{Birkhoff polytope}), the answer to the former is likely yes.  But the latter question
seems difficult and currently out of reach.

\subsection{}
The fact that the limit surface $\Phi(x,y)$ is highly symmetric and piecewise smooth
came as a surprise, as in the initial experiments for averages of all $\si \in \Bax_n$,
the graphs exhibited fewer symmetries and numerous small spikes, see Figure~\ref{fig:skipes}.
Of course, these spiked are due to the the parity differences in
formula~$(\ast)$ in the Main Lemma, and as $n$~grows, they gradually disappear.
Curiously, the asymmetry across the main diagonal persisted until very large~$n$,
suggesting a rather slow convergence in Theorem~\ref{thm:asymptotic}.

\begin{figure}[ht]
  \centering
  \subcaptionbox{}{\includegraphics[width=5.8cm]{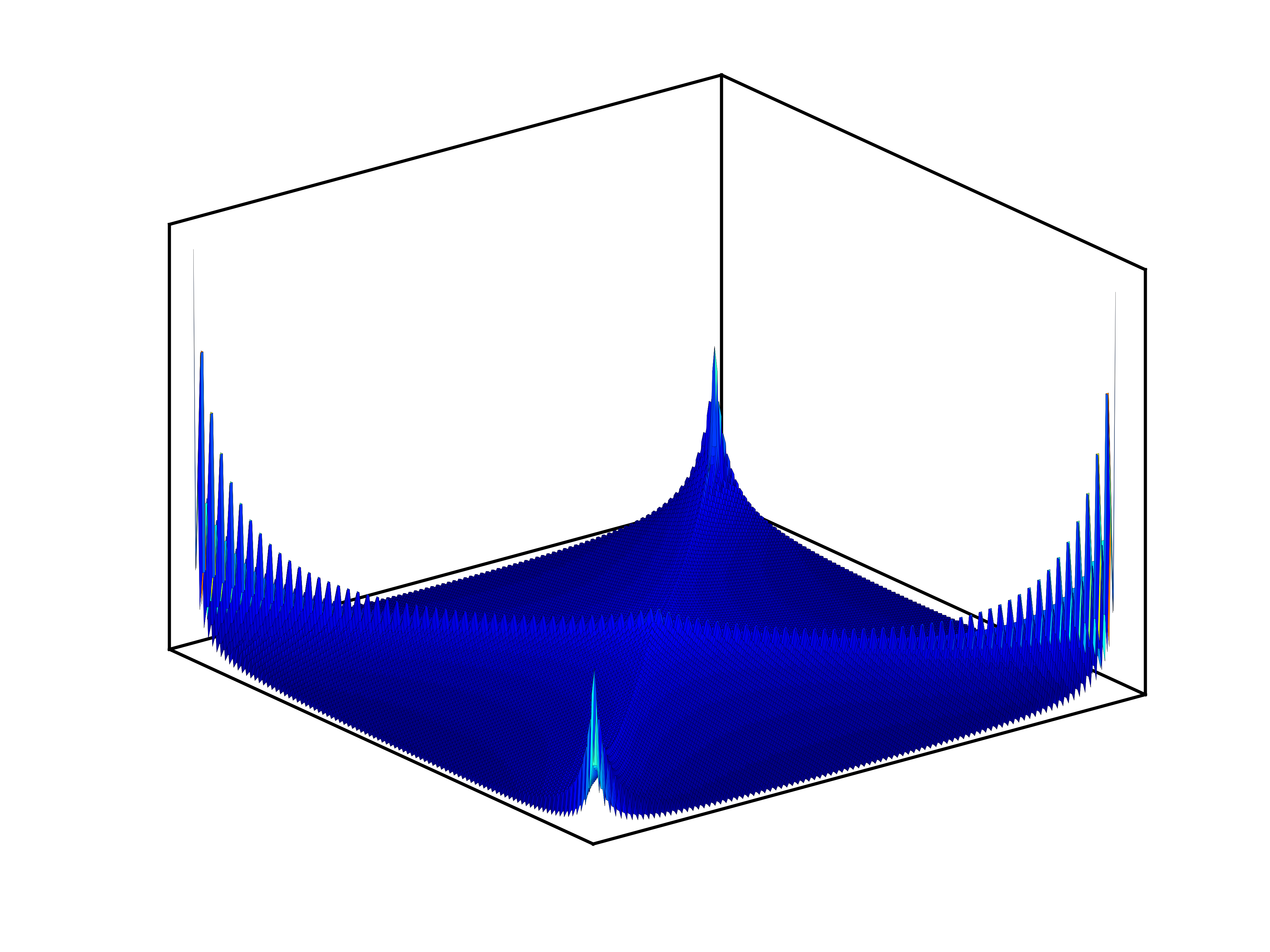}} \qquad \
  \subcaptionbox{}{\includegraphics[width=5.8cm]{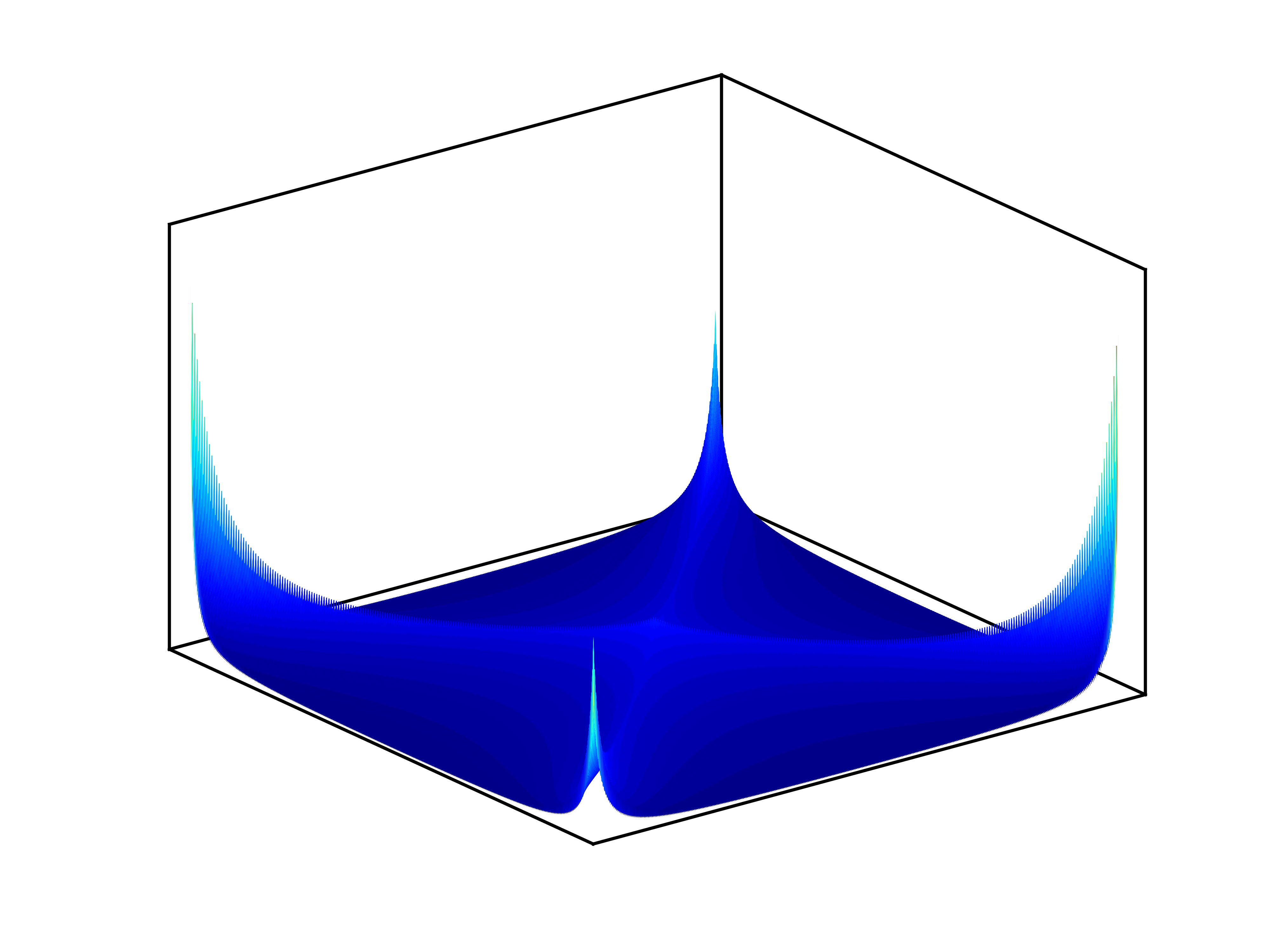}}
  \caption{Averages of all $\si\in \Bax_n$, where $n=200$ and~$800$.}
  \label{fig:skipes}
\end{figure}

To illustrate this phenomenon, note that the limit shape has an extra
degree of symmetry, which the actual numbers $B(m,i,j)$ do not have.
The following two graphs are for $\alpha = 3/40$, and the horizontal axis
is~$\beta$.  The red line is $\varphi(\alpha,\beta)$, the blue line is
$m \cdot P(m,2\alpha m, 2\beta m)$, for $m = 500$ on the left and $m = 2000$
on the right.  For the emphasis, recall that in the notation above,
we have $n=2m$.
\begin{figure}[ht]
  \centering
  \subcaptionbox{}{\includegraphics[width=5.8cm]{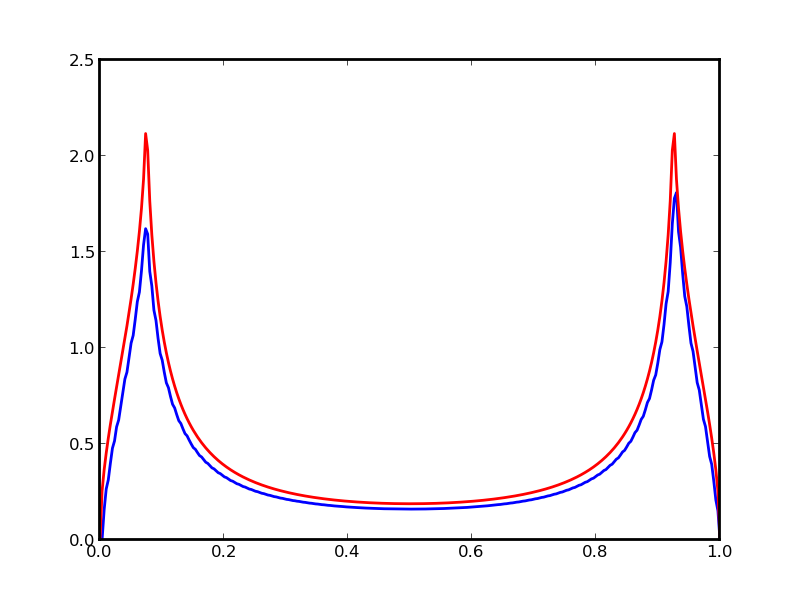}} \qquad \
   \subcaptionbox{}{\includegraphics[width=5.8cm]{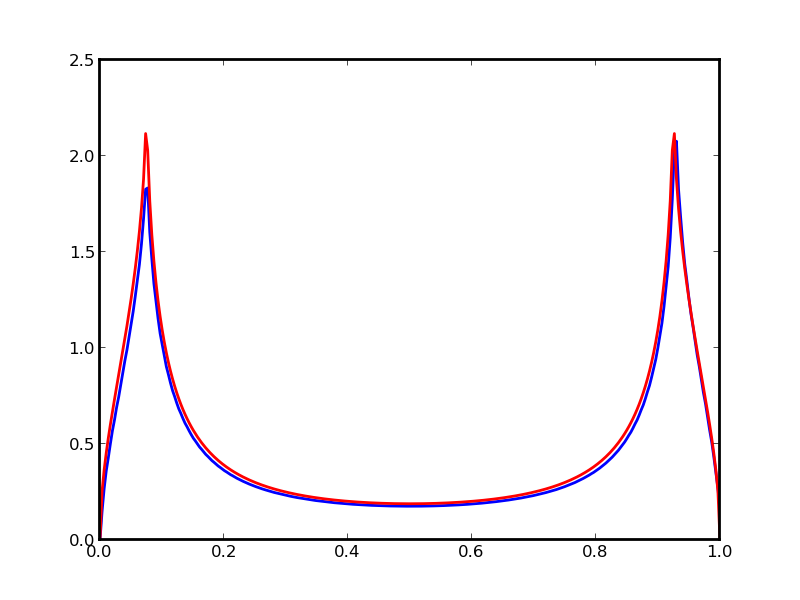}}
  \caption{Horizontal slices of the limit surface.}
  \label{fig:horiz-slices}
\end{figure}

\subsection{}
Let us note that for the case of size~3 patterns mentioned in~$\S\ref{s:fin-3}$,
the random restricted permutations do in fact resemble their limit shapes, due
to exponentially small decay of probabilities away from the anti-diagonal.
However, in the the case of random $\si \in\Bax_n$, permutations tend to
exhibit a high degree or structure, and do not resemble the limit
surface~$\Phi(x,y)$.  This suggests that computing square-to-square
correlations would an interesting problem.  While we expect this to
be doable, this goes outside the scope of our project.

\begin{figure}[ht]
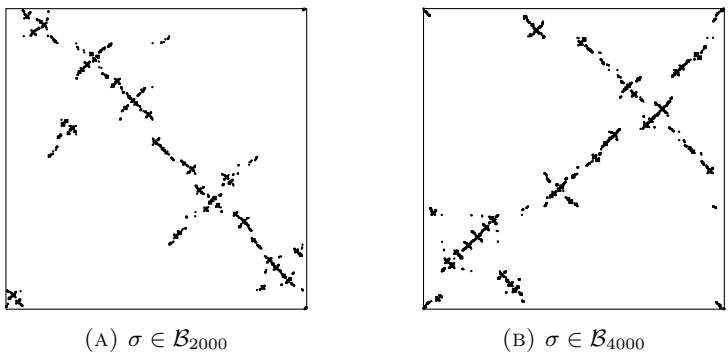

  \centering
\subcaptionbox{$\sigma \in \mathcal{B}_{2000}$}{\input{sample-m-1000.tikz}}\qquad \qquad
\subcaptionbox{$\sigma \in \mathcal{B}_{4000}$}{\input{sample-m-2000.tikz}}
  \caption{Randomly sampled permutations $\si \in \mathcal{B}_{2000}$ and $\mathcal{B}_{4000}$.}
  \label{fig:rand}
\end{figure}

\subsection{}
It would be interesting to compute the limit shape of random Baxter permutations.
While they can be sampled exactly uniformly (see e.g.~\cite{FFNO,Vie}), the
underlying bijection does not seem to give any useful formulas.  In fact,
a special effort is required to sample beyond $n=20$.  Still, the
apparent connection to $\Phi(x,y)$ is undeniable (see Figure~\ref{fig:baxter-alt}).

\begin{figure}[ht]
  \centering
   \subcaptionbox{}{\includegraphics[width=5.8cm]{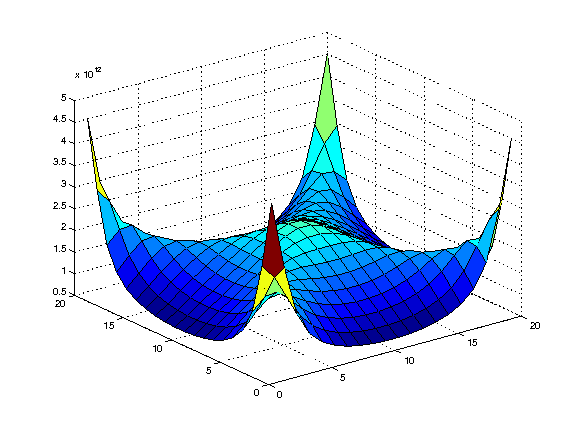}} \qquad \
   \subcaptionbox{}{\includegraphics[width=5.8cm]{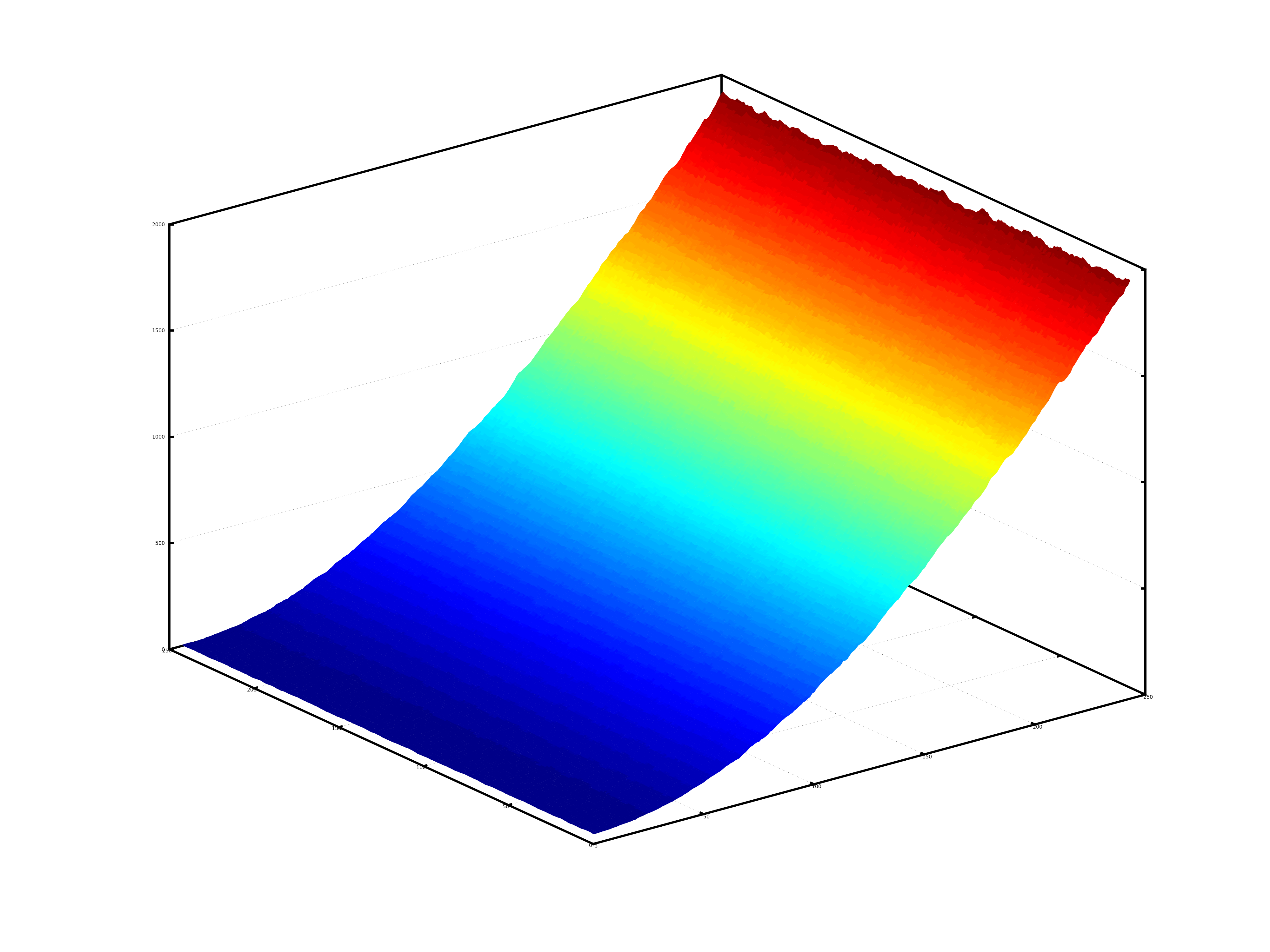}}
  \caption{Averages of Baxter permutations ($n=20$) and alternating permutations $(n=500)$.}
  \label{fig:baxter-alt}
\end{figure}

A possible explanation lies in the ``flat structure'' of alternating
permutations.  As evident from Figure~\ref{fig:baxter-alt} for~$n=500$,
there seem to be a limit surface of alternating permutations, with no spikes
except at the boundary.  In fact, the existence of such a flat surface follows
from the asymptotics of Entringer numbers given in~\cite{DW}.  This suggests
that all spikes in $\Phi(\cdot,\cdot)$ come from the ``Baxter condition''.

\vskip.7cm

\noindent
\textbf{Acknowledgments:} \. The authors are grateful to
Stephen DeSalvo, Scott Garrabrant, Neal Madras and Sam Miner
for useful remarks and help with the references.
The second author was partially supported by the~NSF.

%\newpage

\vskip.9cm

% \bibliography{bib-baxter}
% \bibliographystyle{plain}
\end{document}